\documentclass{amsart}

\usepackage{amsfonts,amssymb,amsmath,amsthm,url}

\addtolength{\oddsidemargin}{-.5in}
\addtolength{\evensidemargin}{-.5in}
\addtolength{\textwidth}{1in}
\addtolength{\topmargin}{-.5in}
\addtolength{\textheight}{1in}


\def\tfr#1/#2{{#1}/{#2}}				
\def\dfr#1/#2{\displaystyle\frac{#1}{#2}}	
\def\fr#1/#2{\frac{#1}{#2}}				

\def\e#1{e^{#1}}						

\def\Real{\mathbb{R}}

\DeclareMathOperator{\cis}{cis}

\newtheorem{theorem}{Theorem}[section]
\newtheorem{lemma}[theorem]{Lemma}

\author{Jacques G\'elinas}

\address{Ottawa, Canada}

\email{jacquesg00@hotmail.com}

\thanks{This work was done in 2019 while the author was a retired mathematician}

\begin{document}


\keywords{Trigonometric series, Blissard symbolic notation, umbral calculus}

\subjclass{Primary 42A32, Secondary 05A40}

\title[Blissard's trigonometric series]{Blissard's trigonometric series with closed-form sums}

\begin{abstract}

This is a summary and verification of an elementary note written by John Blissard in 1862 for the Messenger of Mathematics. A general method of discovering trigonometric series having a closed-form sum is explained and illustrated with examples. We complete some statements and add references, using the summation symbol and Blissard's own (umbral) representative notation for a more concise presentation than the original. More examples are also provided.

\end{abstract}

\maketitle


\section{Historical examples}
	Blissard's well structured note \cite{Blissard:1862m} starts by recalling four trigonometric series which ``mathematical writers have exhibited as results of the differential and integral calculus" (A, B, C, G in the table below). Many such formulas had indeed been worked out by Daniel Bernoulli, Euler and Fourier. More examples can be found in 19th century textbooks and articles on calculus, in 20th century treatises on infinite series \cite{Schwatt:1924, Bromwich:1926, Knopp:1954, Hardy:1949} or on Fourier series \cite{Carslaw:1921, Tolstov:1962}, and in mathematical tables \cite{Jolley:1961, SpiegelHandbook:1968, Hansen:1975, BrychkovMarichevPrudnikov:1998}.

	G.H. Hardy motivated the derivation of some simple formulas as follows \cite[p. 2]{Hardy:1949}. We can first agree that the sum of a geometric series $1+x+x^2+\ldots$ with ratio $x$ is $s=\tfr 1/(1-x)$ because this is true when the series converges for $|x|<1$, and ``it would be very inconvenient if the formula varied in different cases"; moreover, ``we should expect the sum $s$ to satisfy the equation $s=1+sx$". With $x=\e{i\theta}$, we obtain immediately a number of trigonometric series by separating the real and imaginary parts, by setting $\theta=0$, by differentiating, or by integrating \cite[\S13 ]{Euler:1783}. The classical theory of Fourier series \cite{Carslaw:1921} can next give a rigorous proof of our conjectured formulas with their domain of convergence, and can also give more sums.

	Abel's limit theorem \cite[p. 154, 405]{Knopp:1954} provides a very simple proof in some cases. Indeed, Abel's summation by parts shows that if a real power series $f(r)=\sum a_k r^k$ with radius of convergence equal to $1$ converges for $r=1$, then $f(r)$ is continuous from the left at $r=1$.  It is thus sufficient to find a closed-form expression for $f(r)$ and take its limit as $r\to1^-$ in order to justify (or discover) a closed-form formula for $f(1)$. For example, if $0<r<1$ and $0<\theta<2\pi$, then
 \cite[p. 211]{Schwatt:1924}
\begin{align*}
	\sum_{n=1}^\infty \fr \left(re^{i\theta}\right)^n/n &= \int_0^{re^{i\theta}} \fr dt / {1-t}
			= -\log\left(1-re^{i\theta}\right) 
			= -\log\left(1-r\cos\theta - i r\sin\theta\right)
\\
		& = -\fr 1/2\log\left((1-r)^2+4r\sin^2\fr\theta/2\right) + i \arctan\left(\fr r\sin\theta/{1-r\cos\theta}\right).
\end{align*}
 Now Abel's test of convergence for power series with positive coefficients decreasing to $0$, proven by
$(1-z)\sum_{k=0}^n a_kz^k=a_0+a_nz^{n+1}+\sum_{k=1}^n(a_k-a_{k+1})z^k$,  shows that $\sum \tfr e^{in\theta}/n$
 converges if $e^{i\theta}\ne 1$ \cite[p. 244]{Bromwich:1926}. Abel's limit theorem can thus be applied here, and by taking real and imaginary parts in Euler's formula for $e^{in\theta}$ we fully justifiy the first two equations (A, B) in the following table, and the next six formulas (C,D,E,F,G,H) follow immediately. Recent derivations of (B) can
be found in \cite{Marin:2012} (Fourier series) and in \cite{Efthimiou:2006} (Laplace transforms).

\renewcommand{\arraystretch}{2.}

$$
\begin{array}{|l|l|c|c|l|} \hline
	& \text{Equation}	& \text{Domain}	& \text{Method}
\\[0.0em] \hline
\text{A} & \displaystyle\sum_{n=1}^\infty \dfr {\cos n\theta}/n = -\log\left(2\sin\dfr\theta/2\right) & 0<\theta<2\pi
				 &	\displaystyle\sum_{n=1}^\infty \dfr \left(re^{i\theta}\right)^n/n, |r|\le1	 	\phantom{\bigg[}
\\[0.5em] \hline
\text{B} & \displaystyle\sum_{n=1}^\infty \dfr \sin n\theta/n = \dfr \pi-\theta/2 & 0<\theta<2\pi
				 & \text{\cite[p. 211]{Schwatt:1924}}
\\[0.5em] \hline
\text{C} & \displaystyle\sum_{n=1}^\infty (-1)^{n-1}\dfr \cos n\theta/n = \log\left(2\cos\dfr\theta/2\right) & -\pi<\theta<\pi
				 & \theta\to \pi-\theta \text{ in A }
\\[0.5em] \hline
\text{D} & \displaystyle\sum_{n=1}^\infty (-1)^{n-1}\dfr \sin n\theta/n = \dfr \theta/2 & -\pi<\theta<\pi
				 & \theta\to \pi-\theta \text{ in B }
\\[0.5em] \hline
\text{E} & \displaystyle\sum_{n=0}^\infty \dfr \cos (2n+1)\theta/{2n+1} = -\dfr 1/2\log\left(\tan\dfr\theta/2\right) & 0<\theta<\pi
				 & \text{ A $+$ C }
\\[0.5em] \hline
\text{F} & \displaystyle\sum_{n=0}^\infty \dfr \sin (2n+1)\theta/{2n+1} = \dfr \pi/4 & 0<\theta<\pi
				 & \text{ B $+$ D }
\\[0.5em] \hline
\text{G} & \displaystyle\sum_{n=0}^\infty (-1)^n\dfr \cos (2n+1)\theta/{2n+1} = \dfr \pi/4 & -\dfr\pi/2<\theta<\dfr\pi/2
				 & \theta\to \dfr\pi/2-\theta \text{ in F }
\\[0.5em] \hline  
\text{H} & \displaystyle\sum_{n=0}^\infty (-1)^n\dfr \sin (2n+1)\theta/{2n+1} = 
							\dfr 1/4 \log \dfr 1+\sin\theta/{1-\sin\theta} & -\dfr\pi/2<\theta<\dfr\pi/2
				 & \theta\to \dfr\pi/2-\theta \text{ in E } 
\\[0.5em] \hline
\text{I} & \displaystyle\sum_{n=1}^\infty \dfr \cos 2n\pi\theta/{(2n\pi)^{2k}} = 
					\dfr (-1)^{k-1}/2 \dfr B_{2k}(\theta) /{(2k)!}  		&	0\le\theta \le1; k=1,2,\ldots
				 & \text{\cite[p. 256]{Bromwich:1908}} , \text{\cite[p. 370]{Bromwich:1926}  } 
\\[0.5em] \hline
\text{J} & \displaystyle\sum_{n=1}^\infty \dfr \sin 2n\pi\theta/{(2n\pi)^{2k+1}} = 
					\dfr (-1)^{k-1}/2 \dfr B_{2k+1}(\theta)/{(2k+1)!}  		& 	\begin{array}{l}	0<\theta <1;k=0\\0\le\theta \le1;k=1,2,\ldots\end{array}
				 & \text{\cite[p. 256]{Bromwich:1908}} , \text{\cite[p. 370]{Bromwich:1926}} 
\\[0.5em] \hline
\text{K} & \displaystyle\sum_{n=-\infty}^\infty \dfr \cos (n-a)\theta/{n-a} = -\dfr \pi/{\tan(a\pi)}
															 & 0<\theta<2\pi; a\in\Real
				 & \text{\cite[p. 230]{Bromwich:1908}} , \text{\cite[p. 371]{Bromwich:1926}} 
\\[0.5em] \hline
\text{L} & \displaystyle\sum_{n=-\infty}^\infty \dfr \sin (n-a)\theta/{n-a} = \pi & 0<\theta<2\pi; a\in\Real
				 & \text{\cite[p. 230]{Bromwich:1908}} , \text{\cite[p. 371]{Bromwich:1926}} 
\\[0.5em] \hline
\text{M} & \displaystyle\sum_{n=1}^\infty (-1)^{n+1}\dfr \cos n\theta/{n^4} =
		 \dfr 7\pi^4/{720} - \dfr\pi^2\theta^2/{24} + \dfr \theta^4/{48} & -\pi\le\theta\le\pi
				 &  k=2,\,\theta \to \dfr \theta/{2\pi}+\dfr 1/2 \text{ in I }
\\[0.5em] \hline
\text{N} & \displaystyle\sum_{n=1}^\infty (-1)^{n+1}\dfr \sin^2 n\theta/{n^4} = \dfr \pi^2\theta^2/{12} - \dfr \theta^4/6
		 & -\dfr \pi/2\le\theta\le\dfr \pi/2
				 & \sin^2 n\theta = \dfr 1 - \cos 2n\theta /2
\\[0.5em] \hline
\end{array}
 $$

\newpage

\section{Binomial summation formula}

Blissard first proves a summation formula involving a positive integer $n$ and a parameter $m$,
which he also used later to derive a formula involving Bernoulli numbers \cite[p. 56]{Blissard:1864}.

\begin{lemma}							
$$
	\sum_{k=0}^n (-1)^k \binom{n}{k} \fr 1/{m+k} = \fr n!/{\prod_{k=0}^n (m+k)},
		\qquad(m>0,n=0,1,2,\ldots).
 $$
\end{lemma}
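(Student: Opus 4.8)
The plan is to recognize the left-hand side as a single Eulerian (Beta) integral. Since $m>0$, each reciprocal admits the representation $\frac{1}{m+k}=\int_0^1 x^{m+k-1}\,dx$; substituting this termwise and interchanging the finite sum with the integral collapses the binomial coefficients by the binomial theorem:
\[
\sum_{k=0}^n (-1)^k \binom{n}{k}\frac{1}{m+k}
=\int_0^1 x^{m-1}\sum_{k=0}^n \binom{n}{k}(-x)^k\,dx
=\int_0^1 x^{m-1}(1-x)^n\,dx .
\]
The entire problem is thereby reduced to evaluating this last integral.

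The key step is then to identify it with the Beta function and simplify: $\int_0^1 x^{m-1}(1-x)^n\,dx = B(m,n+1)=\frac{\Gamma(m)\,\Gamma(n+1)}{\Gamma(m+n+1)}$. Using $\Gamma(n+1)=n!$ and the telescoping product $\Gamma(m+n+1)=(m+n)(m+n-1)\cdots m\cdot\Gamma(m)$, the ratio $\Gamma(m)/\Gamma(m+n+1)$ becomes $1/\prod_{k=0}^n(m+k)$, which delivers the claimed right-hand side $n!/\prod_{k=0}^n(m+k)$ at once.

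The only point needing care is the evaluation of the Beta integral, and I expect this to be the main (though modest) obstacle if one wants the argument self-contained rather than quoting $B(m,n+1)$. Two elementary remedies are available. First, a single integration by parts yields the recurrence $\int_0^1 x^{m-1}(1-x)^n\,dx=\frac{n}{m}\int_0^1 x^{m}(1-x)^{n-1}\,dx$, which, iterated down to exponent $0$ on $(1-x)$, produces exactly $n!/\prod_{k=0}^n(m+k)$. Alternatively, one can bypass the integral entirely and verify the identity purely algebraically: expanding the rational function $n!/\prod_{k=0}^n(m+k)$ in partial fractions and computing each residue by the cover-up method gives the coefficient $(-1)^k\binom{n}{k}$ of $1/(m+k)$, recovering the left-hand side directly. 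Either route confirms the formula for all $m>0$ and all $n=0,1,2,\ldots$.
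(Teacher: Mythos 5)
Your proof is correct, but it follows a genuinely different route from the paper's own. The paper proves the lemma by induction on $n$, written in Blissard's representative (umbral) notation: setting $R^k := R_k = 1/k$ and downgrading exponents to indices after expansion, the claim becomes $R^m(1-R)^n = n!\,\prod_{k=0}^n R^{m+k}$; the inductive step splits $R^m(1-R)^{n+1} = R^m(1-R)^n - R^{m+1}(1-R)^n$, applies the hypothesis to each piece, and recombines the two products using $\frac{1}{m}-\frac{1}{m+n+1}=\frac{n+1}{m(m+n+1)}$. That argument is purely finite and algebraic, and it deliberately showcases the representative notation that is the subject of the note. Your Beta-integral argument --- writing $\frac{1}{m+k}=\int_0^1 x^{m+k-1}\,dx$, collapsing the sum by the binomial theorem, and evaluating $\int_0^1 x^{m-1}(1-x)^n\,dx$ by the integration-by-parts recurrence --- is essentially the \emph{alternative} proof the paper itself sketches right after its own (described as similar to Walton's method minus a trigonometric substitution), and your partial-fraction fallback is likewise mentioned there as yet another route. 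What your approach buys is brevity and a direct link to classical Gamma/Beta identities; what the paper's approach buys is complete self-containment (no integrals, no special functions) and a working demonstration of the umbral calculus around which the whole paper is organized.
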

\begin{proof}
With Blissard's representative notation, let $R_k=\tfr 1/k$, and downgrade exponents into indices after 
expansion\footnote{$R^k:=R_k$ for $k>0$. We use $R^mR^n = R^{m+n}$ and $R^mC-R^nC=(R^m-R^n)C$ for $m>0, n>0$.}.
It is thus required to prove $R^m(1-R)^n = n! \prod_{k=0}^n R^{m+k}$, which is an identity if $n=0$. 
If we assume that this has been proven for a certain integer $n\ge0$ and all $m>0$, then
\begin{align*}
	 R^m \, (1-R)^{n+1} &= R^m \, (1-R) \,  (1-R)^n = R^m \, (1-R)^n - R^{m+1} \, (1-R)^n
\\
		&= n! \, \prod_{k=0}^{n} R^{m+k} -  n! \, \prod_{k=0}^{n} R^{m+1+k}
		= \left( R^m - R^{m+n+1} \right) \, n! \, \prod_{k=1}^{n} R^{m+k}
\\
		&= (n+1) \, R^m \, R^{m+n+1} \, n! \, \prod_{k=1}^{n} R^{m+k}
			\qquad\left(\fr 1/m - \fr 1/{m+n+1} = \fr n+1/{m(m+n+1)}\right)
\\
		&=  (n+1)! \, \prod_{k=0}^{n+1} R^{m+k}.
\end{align*}
Thus the identity in question holds for the next integer $n+1$ and all $m>0$.
\end{proof}
Since the same term $\tfr1/m$ appears on both sides, the identity can be said to hold also as $m\to0$.
Blissard used the same induction, but less concise notation with a few explicit terms and ``\&c.". He gave a longer proof using polynomials and differentiation in \cite[p. 169]{Blissard:1864w}. Another simple proof, similar to \cite{Walton:1864} minus a trigononetric substitution, uses the recurrence relation of the Beta function~:
\begin{align*}
	\sum_{k=0}^n (-1)^k \binom{n}{k} \fr 1/{m+k} &= \int_0^1x^{m-1}(1-x)^n\,dx =
				\fr x^m/m(1-x)^n\bigg|_0^1+\fr n/m\int_0^1x^m(1-x)^{n-1}\, dx 
\\
	&=\fr n(n-1)/{m(m+1)}\int_0^1x^{m+1}(1-x)^{n-2}\, dx =\ldots = \fr \Gamma(n+1)\Gamma(m)/{\Gamma(m+n+1)}.
\end{align*}
Boole \cite[p. 26]{Boole:1957} obtains this from the $n$-th difference of $1/m=\int_0^\infty e^{-mx}\,dx$, a method of Abel, and
one can also use differences of falling factorial powers \cite[p. 188]{GrahamKnuthPatashnik:1989} or partial fraction decomposition.

\section{Maclaurin logarithm expansion}
A second lemma uses the first one to obtain the expansion of a binomial polynomial times a logarithm via
the Cauchy product of absolutely convergent power series (we omit the proof). The $A_{n,k}$ will appear in the examples presented below.
\begin{lemma} \label{maclog}
	If $n$ is a positive integer and $|x|<1$, then
\begin{align*}
	(1+x)^n\log(1+x) &= \sum_{k=1}^n A_{n,k} x^k + n!\,\sum_{k=1}^\infty (-1)^{k-1} \fr (k-1)!/{(n+k)!} x^{n+k},
\\
\noalign{\text{where}}
	A_{n,k} &:= \sum_{j=1}^k  (-1)^{k-j}\binom{n}{j-1} \fr 1/{k-j+1}.
\end{align*}
\end{lemma}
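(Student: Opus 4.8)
The plan is to expand both factors as power series and form their Cauchy product. For $|x|<1$ we have the polynomial $(1+x)^n=\sum_{i=0}^n\binom{n}{i}x^i$ together with the absolutely convergent series $\log(1+x)=\sum_{m=1}^\infty(-1)^{m-1}x^m/m$; since the first factor is a finite sum, the product is merely a finite linear combination of shifted copies of the logarithm series, so no convergence subtlety arises and we may read off the coefficient of each power of $x$. Because the logarithm series has no constant term, the product starts at $x^1$, and for $\ell\ge1$ the coefficient of $x^\ell$ is
$$
	c_\ell=\sum_{\substack{0\le i\le n\\ i\le \ell-1}}\binom{n}{i}\,\fr (-1)^{\ell-i-1}/{\ell-i},
$$
the two inequalities recording the constraints $i\le n$ and $m=\ell-i\ge1$. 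The argument then splits according to which of these is binding, and that dichotomy is exactly what separates the finite head from the infinite tail.

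For $1\le\ell\le n$ the binding constraint is $m\ge1$, so $i$ runs from $0$ to $\ell-1$. The substitution $j=i+1$ turns the sum verbatim into
$$
	c_\ell=\sum_{j=1}^\ell(-1)^{\ell-j}\binom{n}{j-1}\,\fr 1/{\ell-j+1}=A_{n,\ell},
$$
which supplies the first, finite sum in the statement. This step is pure relabelling and needs no input beyond the definition of $A_{n,k}$.

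For $\ell=n+k$ with $k\ge1$ the binding constraint is instead $i\le n$, so $i$ ranges over all of $0,\ldots,n$. Reversing the order of summation by setting $s=n-i$ rewrites the coefficient as
$$
	c_{n+k}=(-1)^{k-1}\sum_{s=0}^n(-1)^s\binom{n}{s}\,\fr 1/{k+s}.
$$
The inner sum is precisely the alternating binomial sum of the first lemma (with $m$ replaced by $k$ and the summation index written as $s$), so it equals $n!/\prod_{s=0}^n(k+s)=n!\,(k-1)!/(n+k)!$. Substituting this closed form gives $c_{n+k}=(-1)^{k-1}n!\,(k-1)!/(n+k)!$, whence $\sum_{k\ge1}c_{n+k}x^{n+k}=n!\sum_{k\ge1}(-1)^{k-1}(k-1)!\,x^{n+k}/(n+k)!$, the required second sum.

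The one place where something is genuinely used is this last step: after reversing the summation order in the tail, the inner sum is exactly the quantity computed in the binomial summation lemma. Everything else is careful bookkeeping of the Cauchy-convolution index ranges — in particular, keeping track of which of the two constraints $i\le n$ and $\ell-i\ge1$ cuts off the sum in each regime. I expect that bookkeeping to be the only real source of error, since an off-by-one in the cut-off would misplace the boundary between the polynomial head and the logarithmic tail.
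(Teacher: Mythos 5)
Your proof is correct and follows exactly the route the paper indicates: the paper omits the details but states that the lemma follows from the Cauchy product of the binomial polynomial with the logarithm series, with the binomial summation formula (the first lemma) evaluating the tail coefficients. Your case split at $\ell\le n$ versus $\ell=n+k$ and the application of the first lemma with $m=k$ supply precisely the omitted bookkeeping, and both index changes check out.
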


\section{The general recipe}
	A simple method for discovering trigonometric series with a closed-form sum is to start from the Maclaurin expansion of a known function of $x$ and set $x=\e{i\theta}$, as was done above for the geometric series. Blissard states that this works, ``whatever form $f(x)$ may assume'', for the expressions $f(x) \pm f(x^{-1})$ which ``can be evaluated in terms of trigonometrical functions of $\theta$'' with the help of the lemma below. Of course, these combinations give twice the real and imaginary parts of $f(x)$ if $f$ is real. We again omit the proof, done by a simple verification of each case.
\begin{lemma} \label{xeit}
	If $x=\e{i\theta}$ and $\cis\theta := \cos\theta + i\sin\theta$ , then
\begin{align*}
(1)\qquad&	\log x =i\theta,	\quad \log x^{-1} = -i\theta
\\
(2)\qquad& x^n = \cis(n\theta),	\quad x^{-n} = \cis(-n\theta)		\qquad\qquad\qquad(\text{De Moivre})
\\
(3)\qquad& x^n+x^{-n} = 2\cos n\theta,	\quad x^n-x^{-n} = 2i\sin n\theta
\\
(4)\qquad& \left(1+x\right)^n = x^{\tfr n/2} \left(2\cos \fr \theta/2\right)^n, \quad  \left(1+x^{-1}\right)^n = x^{-\tfr n/2} \left(2 \cos \fr \theta/2\right)^n
\\
(5)\qquad& \left(1-x\right)^n = e^{-\tfr i n \pi/2} x^{\tfr n/2} \left(2\sin \fr \theta/2\right)^n, \quad  (1-x^{-1})^n = e^{\tfr i n \pi/2} x^{-\tfr n/2} \left(2\sin \fr \theta/2\right)^n
\\
(6)\qquad&  e^{- i \tfr n \pi/2} x^{m} + e^{\tfr i n \pi/2} x^{-m}= 2\cos\left(m\theta-n\fr \pi/2\right)
\\
(7)\qquad&  e^{- i \tfr n \pi/2} x^{m} - e^{\tfr i n \pi/2} x^{-m}= 2i\sin\left(m\theta-n\fr \pi/2\right)
\end{align*}

\end{lemma}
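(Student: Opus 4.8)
The plan is to verify the seven identities one at a time, directly from the defining relation $x=\e{i\theta}$ together with Euler's formula, arranging the cases so that the later ones reuse the earlier. Identities (1) and (2) are immediate: $\log x = \log\e{i\theta} = i\theta$ on the principal branch, while $x^{\pm n} = \e{\pm in\theta} = \cis(\pm n\theta)$ is De Moivre's theorem, the $x^{-1}$ and $x^{-n}$ forms being the cases with $\theta$ replaced by $-\theta$. Adding and subtracting the two halves of (2) and using $\e{i\phi}+\e{-i\phi}=2\cos\phi$ and $\e{i\phi}-\e{-i\phi}=2i\sin\phi$ then gives (3) at once.

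The real content is in (4) and (5), where the key device is the half-angle factorization. For (4) I would write
$$
1+x = 1+\e{i\theta} = \e{i\theta/2}\bigl(\e{-i\theta/2}+\e{i\theta/2}\bigr) = x^{1/2}\cdot 2\cos\frac{\theta}{2},
$$
and raise to the $n$-th power; the companion for $1+x^{-1}$ follows by applying the identical factorization to $1+\e{-i\theta}$, which produces $x^{-1/2}\cdot 2\cos(\theta/2)$. For (5) the same factorization gives $1-x = \e{i\theta/2}(\e{-i\theta/2}-\e{i\theta/2}) = -2i\sin(\theta/2)\cdot x^{1/2}$, and absorbing the scalar via $-2i = 2\e{-i\pi/2}$ yields $(1-x)^n = \e{-in\pi/2}x^{n/2}(2\sin(\theta/2))^n$; the $1-x^{-1}$ version is obtained the same way from $1-\e{-i\theta}$, now using $2i = 2\e{i\pi/2}$.

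Finally, (6) and (7) collapse back to the computation behind (3): writing $\e{\mp in\pi/2}x^{\pm m} = \e{\pm i(m\theta-n\pi/2)}$ and then summing or differencing produces $2\cos(m\theta-n\pi/2)$ and $2i\sin(m\theta-n\pi/2)$ by Euler's formula. I expect the only genuine obstacle to be phase- and branch-bookkeeping rather than anything analytic: one must fix the branch $x^{1/2}=\e{i\theta/2}$ consistently so that $x^{\pm n/2}$ is read as $\e{\pm in\theta/2}$, and one must identify $-2i$ with $2\e{-i\pi/2}$ (and $2i$ with $2\e{i\pi/2}$), rather than with some other coterminal phase, so that the exponents in (5) match the statement verbatim. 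Since each line is a pointwise algebraic equality in $\Cplx$, no questions of convergence or analyticity intervene.
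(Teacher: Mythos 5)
Your proposal is correct: each identity checks out, including the branch choice $x^{1/2}=\e{i\theta/2}$ and the phase identifications $-2i=2\e{-i\pi/2}$, $2i=2\e{i\pi/2}$ in (5). The paper explicitly omits the proof, describing it as ``a simple verification of each case,'' which is exactly what you have carried out, so your argument matches the intended one.
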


\section{First logarithmic function}
	If $f(x)=x^m(1+x)^n\log(1+x)$ where $n$ is a positive integer, then for $x=\e{i\theta}$,
\begin{align*}
	f(x) &= \left(2\cos\fr \theta/2\right)^n x^{m+\tfr n/2} \log\left(x^{\tfr 1/2}2\cos\fr \theta/2\right)
\\
	 &= \sum_{k=1}^n A_{n,k} x^{k+m} + n!\,\sum_{k=1}^\infty (-1)^{k-1} \fr (k-1)!/{(n+k)!} x^{m+n+k}
\end{align*}
with the $A_{n,k}$ defined above in lemma \ref{maclog}.
Separating the real and imaginary parts yields two identities (I,II) from
\begin{align*}
\sum_{k=1}^\infty & (-1)^{k-1} \fr (k-1)!n!/{(n+k)!}  \cis \left(m+n+k\right)\theta =
								- \sum_{k=1}^n A_{n,k}\,\cis \left(k+m\right)\theta
\\
	&\quad +\left(2\cos\fr \theta/2\right)^n \left\{ \log\left(2\cos\fr\theta/2\right)\,
			\cis\left(m+\fr n/2\right)\theta + i\fr\theta/2 \,\cis\left(m+\fr n/2\right)\theta \right\}.
\end{align*}
The domain of validity is given as $-\pi<\theta<\pi$.

\section{Second logarithmic function}
	If $f(x)=x^m(1-x)^n\log(1-x)$ where $n$ is a positive integer, then two other identities (III,IV) are derived by the method used in the previous section from
\begin{align*}
(-1)^{n+1}\sum_{k=1}^\infty & \fr (k-1)!n!/{(n+k)!}  \cis \left(m+n+k\right)\theta =
					  \sum_{k=1}^n A_{n,k} \,\cis \left(k+m\right)\theta
\\
	&\quad+\left(2\sin\fr \theta/2\right)^n \left\{ \log(2\sin\fr\theta/2)\,\cis \left(n\fr\pi/2-\left(m+\fr n/2\right)\theta\right)
				+i\fr\theta/2 \,\cis \left(n\fr\pi/2-\left(m+\fr n/2\right)\theta\right) \right\}
\end{align*}
The domain of validity is given as $-2\pi<\theta<2\pi$.

\section{Special cases}
	The four trigonometric series with general term $\tfr\cos (n\theta)/n,\tfr\sin (n\theta)/n$ and their alternating version, (A, B, C, D) in the table above, are obtained by making $m\to0,n\to0$ in the four previous identities (I,II,III,IV) and these examples, first announced in \cite[p. 281]{Blissard:1861}, justify the word ``generalization"
in the original title chosen by Blissard.

	Two identities (I',II') with domain of validity $-\pi<\theta<\pi$ are next obtained by substituting $m=-n$ in (I,II), or by separating the real and imaginary parts in
\begin{align*}
\sum_{k=1}^\infty & (-1)^{k-1} \fr (k-1)!n!/{(n+k)!}  \cis\,k\theta =
								- \sum_{k=1}^n A_{n,k}\,\cis \left(k-n\right)\theta
\\
	&\quad +\left(2\cos\fr \theta/2\right)^n \left\{ \log\left(2\cos\fr\theta/2\right)\,
			\cis\left(-\fr n/2\right)\theta + i\fr\theta/2 \,\cis\left(-\fr n/2\right)\theta \right\}.
\end{align*}

	Substituting $\theta=0$ in I' yields the sum of a numeric alternating series,
$$
	\sum_{k=1}^\infty (-1)^{k-1} \fr (k-1)!n!/{(n+k)!} =  2^n\log 2 - \sum_{k=1}^n A_{k,n}.
 $$

	Another numeric alternating series is summed by substituting $\theta=\pi/2$ in II',
\begin{align*}
\sum_{k=1}^\infty & (-1)^{k-1} \fr (2k-2)!n!/{(n+2k-1)!} =	 \sum_{k=1}^n A_{n,k}\,\sin \left(\fr (n-k)\pi/2\right)
\\
&\quad + 2^{\tfr n/2}\left\{ \fr \pi/4 \cos\left(\fr n\pi/4\right)
		 - \sin \left(\fr n\pi/4\right) \log\left(2\cos \fr \theta/2\right)\right\}.
\end{align*}

\section{Other functions}
	With $f(x)=\arctan(x)$, Blissard sums four series having general term $\tfr \cos[(2n+1)\theta]/n$, $\tfr \sin[(2n+1)\theta]/n$ and their alternating version, obtaining directly (G, H)  then (E,F) with $\pi/2-\theta$ in the table above. 
	
	The function $f(x)=\arctan(\tfr 1/[x^m(1-x)^n])$ is used to sum two alternating series,
\begin{align*}
	\sum_{k=0}^\infty  \fr (-1)^k/{2k+1} \fr \cos(2k+1)m\theta/{(2\cos\theta)^{(2k+1)n}} &=
		\fr 1/2 \arctan\left\{ \fr 2(2\cos\theta)^m\cos m\theta/{(2\cos\theta)^{2n}-1} \right\},
\\
	\sum_{k=0}^\infty \fr (-1)^k/{2k+1} \fr \sin(2k+1)m\theta/{(2\cos\theta)^{(2k+1)n}} &=
	\fr 1/4 \log  \fr {1 + 2 (2\cos\theta)^n \sin m\theta + (2\cos\theta)^{2n}}/
		{1 - 2 (2\cos\theta)^n \sin m\theta + (2\cos\theta)^{2n}}.
\end{align*}

	The function $f(x)=\log(1+x+x^2)$ generates four new series, after replacing $\theta$ by $2\theta$~:
\begin{align*}
	\sum_{k=1}^\infty  \fr (-1)^{k-1}/k \fr \cos 3k\theta/{(2\cos\theta)^k} &= \log \left(\fr 1+2\cos 2\theta/{2\cos\theta}\right)
\\
	\sum_{k=1}^\infty  \fr (-1)^{k-1}/k \fr \sin 3k\theta/{(2\cos\theta)^k} &= \theta
\\
	\sum_{k=1}^\infty  \fr (-1)^{k-1}/k  (2\cos\theta)^k\cos 3k\theta &= \log \left(1+2\cos 2\theta\right)
\\
	\sum_{k=1}^\infty  \fr (-1)^{k-1}/k  (2\cos\theta)^k\cos 3k\theta &= 2\theta.
\end{align*}

	Next, $f(x)=1+x+\ldots+x^{n+r}$ yields the sum of two other alternating series,
\begin{align*}
	\sum_{k=0}^\infty \fr (-1)^k/k \left(\fr \sin n\theta/{\sin r\theta}\right)^k \cos \left(k(n+r)\theta\right)
		 &= \log \left\{\fr \sin(n+r)\theta/{\sin r\theta}\right\},
\\
	\sum_{k=0}^\infty \fr (-1)^k/k \left(\fr \sin n\theta/{\sin r\theta}\right)^k \sin \left(k(n+r)\theta\right)
		 &= n\theta.
\end{align*}

Finally, Blissard derives with $f(x)=\cos x$ and $f(x)=\sin x$ two ``elegant formulae''~:
\begin{align*}
	\tan(\cos\theta) = \fr  {\displaystyle\sum_{n=0}^\infty (-1)^n \dfr \cos (2n+1)\theta/{(2n+1)!}}/
							    {\displaystyle\sum_{n=0}^\infty (-1)^n \dfr \cos (2n)\theta/{(2n)!}}
		=  \fr  {\displaystyle\sum_{n=1}^\infty (-1)^{n-1} \dfr \sin (2n)\theta/{(2n)!}}/
							   {\displaystyle\sum_{n=0}^\infty (-1)^n \dfr \sin (2n+1)\theta/{(2n+1)!}},
\end{align*}
adding that ``these equations hold quite generally''. Indeed, many readers of the Education Times
\cite[p. 64]{Blissard:1866ET2114} noted that, with $x=\e{i\theta}$, the middle quotient
can be reduced by using De Moivre's formula from lemma \ref{xeit}, to
$$
	\fr {\sin x + \sin x^{-1}}/{\cos x + \cos x^{-1}}
			= \fr {2\sin \fr {x+x^{-1}}/2\cos \fr {x-x^{-1}}/2}/{2\cos \fr {x+x^{-1}}/2\cos \fr {x-x^{-1}}/2}
			= \tan \fr {x+x^{-1}}/2  = \tan(\cos\theta).
 $$
The other ``elegant" identity follows likewise from
$$
	\fr {\cos x^{-1} - \cos x}/{\sin x - \sin x^{-1}}
			= \fr {2\sin \fr {x+x^{-1}}/2\sin \fr {x-x^{-1}}/2}/{2\cos \fr {x+x^{-1}}/2\sin \fr {x-x^{-1}}/2}
			= \tan \fr {x+x^{-1}}/2  = \tan(\cos\theta).
 $$

\section{Heuristic rule for the domain of validity}
	The author proposes a rule giving the domain of the independent variable for which the closed-form sum formulas remain valid, but without proof, illustrating it with three examples of erroneous results.
\begin{quote}	
	If such an infinite trigonometrical series as
	$$
		\sum_{k=0}^\infty (\pm1)^k\fr \cos^m(p+kr)\theta/{(p+kr)^n}
	 $$
	 is capable of being summed\footnote{``An infinite trigonometrical series is said to be summed when its value is expressed in finite terms'' \cite[p. 50]{Blissard:1864m}}
	  in terms of the arc $\theta$, then
	 \begin{enumerate}
	 \item The range of application is $|\theta|<\tfr 2\pi/{mr}$ if all signs are positive
	 \item The range of application is $|\theta|<\tfr \pi/{mr}$ if the signs alternate
	 \item The endpoints are \emph{ordinarily} included within the range of application.
	 \end{enumerate}
\end{quote}

	Blissard adopted later a different point of view, accepting that the sum ``is not represented by a single analytical expression" 
	\cite[\S9.11]{WhittakerWatson:1927}~:
\begin{quote}	
	``Further investigation of this important and interesting subject has however led me to perceive that such series may for their whole period i.e. from all values assigned for the arc from $0$ to $2\pi$ have \emph{several ranges of application} and a distinct summation corresponding to each range." \cite{Blissard:1864m}
\end{quote}
	An ingenious method for discovering these ``ranges" is to use elementary decompositions (such as $4\sin^3\theta = 3\sin\theta-\sin 3\theta$) and differentiation to get simpler series (having terms such as $\cos (2n+1)\theta$) which diverge for very obvious values of the variable $\theta$. The formulas for the original series can then be recovered by integration. This method is explained further in the section ``Discovering the discontinuities in the sum of a trigonometrical series'' of \cite{Bromwich:1926} where it is attributed to Stokes (1847) for the case of Fourier series. A simple example from \cite{Blissard:1864m} is
$$
	\sum_{n=0}^\infty (-1)^n \fr \cos (2n+1)\theta /{(2n+1)^3} =
		\begin{cases}
	\fr    \pi^3/{32}                     - \fr \pi\theta^2/8  &       0     \le \theta \le \fr  \pi/2 \\
	\fr   3\pi^3/{32} - \fr \pi^2\theta/4 + \fr \pi\theta^2/8  &  \fr  \pi/2 \le \theta \le \fr 3\pi/2 \\
	\fr -15\pi^3/{32} + \fr \pi^2\theta/2 - \fr \pi\theta^2/8  &  \fr 3\pi/2 \le \theta \le 2\pi.
		\end{cases}
 $$

\section{More examples}
	Blissard stated ``We can obtain by the above method numerous trigonometrical formulae, some of which appear to be remarkable. I subjoin some examples which the young student, for whom this paper is chiefly intended, may work out for himself". Indeed, it is immediate to find some suitable Maclaurin series, for example in 
	\cite[p. 112]{SpiegelHandbook:1968}.
\begin{align*}
	\fr \log(1+x)/{1+x} &= \sum_{n=1}^\infty (-1)^{n-1} H_n x^n,	\qquad(|x|<1,H_n=1+\fr 1/2+\ldots+\fr 1/n)
\\
	\log \left(\fr x/{\sin x}\right) &= \sum_{n=1}^\infty 2^{2n}\fr |B_{2n}|/{2n} \fr x^{2n}/{(2n)!}, 
				\quad(|x|<\pi, B_{2n}=\text{ Bernoulli number }=1,\fr 1/6,-\fr 1/{30},\fr 1/{42},...)
\\
	\log \sec x &= \sum_{n=1}^\infty  2^{2n}\left(2^{2n}-1\right)\fr |B_{2n}|/{2n} \fr x^{2n}/{(2n)!}, 
				\qquad(|x|<\fr\pi/2, B_{2n}=\text{Bernoulli number})
\end{align*}

	The first series corresponds to $m=0$ and $n=-1$ in the first logarithmic function above and yields,
for $-\pi/2<\theta<\pi/2$,
$$
2\sum_{n=1}^\infty  (-1)^{n-1} H_n \cis n\theta =
	\log\left(2\cos\theta\right) + \theta\tan\theta
	+ i\left[\theta - \tan\theta\,\log\left(2\cos\theta\right)\right].
 $$

\section{Numerical verification}
	The typography in 1860's England was not totally reliable, in particular in the first volumes of mathematical publications -- there are many noticable errors in the Quaterly Journal, for example. A computer algebra system (CAS) can be used to detect easily missing terms, digit inversions, sign errors, or other obvious misprints. We provide in this section a minimal set of instructions for the freely available GP/PARI Calculator\footnote{A browser-based implementation is made available at \url{http://pari.math.u-bordeaux.fr/gp.html}}, from Karim Belabas, Henri Cohen and the PARI Group \cite{BelabasCohen:2014}, but other software such as Maple, Mathematica, Python or even Matlab could be chosen instead. In order to detect possible errors in some of the equations of Blissard, we simply verify for some values of the parameters ($n=0,1,\ldots,N$) that the left-hand side of the identity is close, in some sense, to the right-hand side.

\begin{verbatim}
N  = 8;
DS = 16;        \\ number of terms in a Taylor series from \ps
default(seriesprecision,DS);

cis(t) = exp(I*t);
                \\ Floating-point equality (at r=7/8 of precision)
near(x, y, r=7/8) = if(x==y, 1, exponent(normlp(Pol(x-y)))/exponent(0.) > r);

CHECK(e, msg="") = print( if(!e, "Failed: ", "Passed: "), msg );
CHECKN(fn, vn="(n,m)", r=3/4) =  CHECK(N+1==sum(n=0,N, near(eval(Str("L",fn,vn)),\
                   eval(Str("R",fn,vn)), r) ), Str(fn) );

                \\ Binomial summation formula (page 124)
L124(n,m) = sum(k=0,n, (-1)^k*binomial(n,k)/(m+k) );
R124(n,m) = n! / prod(k=0,n,m+k);
CHECKN( 124 );
                \\	Induction proof of Blissard
LA124(n,m) = L124(n+1,m);
RA124(n,m) = L124(n,m) - L124(n,m+1);
CHECKN( A124 );
LB124(n,m) = R124(n,m) - R124(n,m+1);
RB124(n,m) = R124(n+1,m);
CHECKN( B124 );

                \\ Maclaurin logarithm expansion
Akn(k,n) = sum(j=1,k, (-1)^(k-j)*binomial(n,j-1)/(k+1-j) );
L125(n,x)= sum(k=1,n, Akn(k,n)*x^k ) \
                    + n!*sum(k=1,DS, (-1)^(k-1)*(k-1)!/(n+k)!*x^(n+k));
R125(n,x)= (1+x)^n*log(1+x);
CHECKN( 125, "(n,x)" );

                \\ First logarithmic function
L127(n,m,x) = x^m*L125(n,x);
R127(n,m,x) = x^m*R125(n,x);
for(m=0,N, CHECKN( 127, Str("(n,",m,",x)") ));

                \\ Second logarithmic function
L128(n,m,x) = x^m*L125(n,-x);
R128(n,m,x) = x^m*R125(n,-x);
for(m=0,N, CHECKN( 128, Str("(n,",m,",x)") ));

L_127(m,n,t,K=400) = sum(k=1,K,(-1)^(k-1)*n!*(k-1)!/(n+k)!*cis((m+n+k)*t));
R_127(m,n,t) = - sum(k=1,n, Akn(k,n)*cis((k+m)*t)) \
   + (2*cos(t/2))^n*( log(2*cos(t/2))*cis((m+n/2)*t) + I*t/2*cis((m+n/2)*t) );


\\ \sum_{k=1}^\infty (-1)^{k-1} \fr (k-1)!n!/{(n+k)!} = 2^n\log 2 - \sum_{k=1}^n A_{n,k}
L130(n,K=400) = sum(k=1,K, 1.0 * (-1)^(k-1) * (k-1)!*n!/(n+k)! );
R130(n) = 2^n*log(2) - sum(k=1,n,Akn(k,n));
for(n=4,N, CHECK( near(L130(n,400), R130(n), 1/8),\
                  Str("0!/(n+1)!-1!/(n+2)!..., n = ",n) ) );
for(n=0,N, CHECK( near(L_127(-n,n,0,400), L130(n,400), 1/8),\
                  Str("L_127(-n,n,0) = L130(n), n = ",n) ) );

\\ \sum_{k=1}^\infty (-1)^{k-1} \fr (2k)!n!/{(n+2k+1)!}
LM1_131(n,K=400) = sum(k=1,K,(-1)^(k) * 1.0 * (2*k-2)!*n!/(n+2*k-1)!);
for(n=2,N, CHECK( near(-imag(L_127(-n,n,Pi/2,400)), LM1_131(n,400), 1/8),\
                  Str("L_127(-n,n,Pi/2) = LM1_131(n), n = ",n) ) );
for(n=2,N, CHECK( near(-imag(R_127(-n,n,Pi/2)), LM1_131(n,400), 1/8),\
                  Str("R_127(-n,n,Pi/2) = LM1_131(n), n = ",n) ) );

H(n) = sum(k=1,n,1/k);        \\ More examples

CHECK( log(1+x)/(1+x) == sum(n=1,DS,(-1)^(n-1)*H(n)*x^n),"log(1+x)/(1+x)");
CHECK( log(x/sin(x) ) == sum(n=1,DS/2, m=2*n;\
                  2^m*abs(bernfrac(m))/m*x^m/m!), "log(x/sin x)" );
CHECK( log(1/cos(x) ) == sum(n=1,DS/2, m=2*n;\
                  2^m*abs(bernfrac(m))/m*x^m/m!*(2^m-1)), "log(sec x)" );
\end{verbatim}

\section{Acknowledgement}
The author thanks Mr Garry Herrington for reviewing an earlier version of this document and communicating useful comments, corrections and suggestions to improve it.

\newpage

\bibliographystyle{plain}

\end{document}